\documentclass[reqno,11pt]{amsart}

\usepackage{amssymb,amsthm}
\usepackage{amsmath}
\usepackage{amsfonts}
\usepackage{dsfont}

\usepackage[a4paper,  margin=3.3cm]{geometry}
\usepackage[active]{srcltx}
\makeatletter\@addtoreset{equation}{section}\makeatother

\newtheorem{theorem}{Theorem}[section]
\newtheorem{corollary}[theorem]{Corollary}
\newtheorem{lemma}[theorem]{Lemma}

\newtheorem{proposition}[theorem]{Proposition}

\newtheorem{definition}[theorem]{Definition}

\numberwithin{equation}{section}

\title[Lusin spaces as images of locally compact Polish spaces]{Lusin spaces as images of locally compact Polish spaces}
\author{Alina Kargol}

\address{Instytut Matematyki, Uniwersytet Marii Curie-Sk{\l}odowskiej, plac Marii Curie-sk{\l}odowskiej 1, 20-031 Lublin, Poland; fax +48 81 537 5471}
\email{alina.kargol@mail.umcs.pl}

\author{ Yuri  Kozitsky}

\address{Instytut Matematyki, Uniwersytet Marii Curie-Sk{\l}odowskiej, plac Marii Curie-sk{\l}odowskiej 1, 20-031 Lublin, Poland; fax +48 81 537 5471}
\email{jurij.kozicki@mail.umcs.pl}

\keywords{Baire space; Polish space; Alexandroff compactification;
$c$-Lusin space}
\begin{document}

\subjclass{54E45; 54E40;  54C10; 60B05}%

\begin{abstract}
A Lusin space is a Hausdorff space being the image of a Polish space
under a continuous bijection. Such spaces have multiple
applications, in particular, as state spaces of various stochastic
systems. In this work, we consider the spaces obtained as the images
of a noncompact and locally compact Polish space $(X, \mathcal{T})$,
which we call $c$-Lusin. The main result is the statement that a
$c$-Lusin space $Y=f(X)$, can be written as $Z\cup Y_1$, where $Z$
is a locally compact Polish space whereas $Y_1$ is $c$-Lusin. At the
same time, $Y_1$ is the set of the discontinuity points of $f^{-1}$
which is a closed subset of $Y$. Moreover, $Y_1$ is nowhere dense if
 (and only if) $Y$ is a Baire space.
 By the same arguments, $Y_1$ can also be
decomposed as $Z_1 \cup Y_2$ with the properties as above. In the
case where $f$ can be extended to a continuous map $f:X\cup
\{\infty\} \to Y$, and thus $Y_1$ is a singleton, we  construct a
metric on $X$ such that the corresponding metric space is compact
and homeomorphic to the $c$-Lusin space $(f(X), \mathcal{T}')$.
\end{abstract}

\maketitle

\section{Introduction}

A Polish space is a separable space  the topology of which is
consistent with a complete metric. Polish spaces, as well as their
images, cf.  \cite[pages 239--277]{Cohn} and \cite{Levi,Nik}, have
multiple applications. A Lusin space is a Hausdorff space that is
the image of a Polish space under a continuous bijection, see
\cite{Bes} or \cite[page 273]{Cohn}.

To have the freedom of dealing with different topologies defined on
the same underlying set $X$, we use the notation $(X,\mathcal{T})$
for the corresponding topological space.  Let $(X,\mathcal{T})$ and
$(Y,\mathcal{T}')$ be a Polish space and $Y=f(X)$ for a continuous
bijection $f$. An important feature of this pair
 is that the spaces
are Borel isomorphic, see \cite[Proposition 8.6.13, page 275]{Cohn}.
In view of this fact, along with Polish spaces Lusin spaces are
frequently used as state spaces of a broad range of stochastic
systems, see, e.g., \cite{BBR,BK,Craul,Dawson,PT}.

Lusin spaces have a plenty of topological properties, see e.g.,
\cite{Bes} and the literature quoted therein. Some of them may be
quite different from those of Polish spaces. In particular, a Lusin
space need not be metrizable. An example can be an infinite
dimensional linear space, $X$, equipped with the weak topology
relative to the topology that makes $X$ a separable Banach space.
The situation with the mentioned properties can get more
controllable if one assumes additional properties of the Polish
space $(X,\mathcal{T})$. If it is compact, then  $(f(X),
\mathcal{T}')$ is also compact and $f$ is a homeomorphism. In this
work, we study the spaces obtained as the images of a noncompact and
locally compact Polish space $(X,\mathcal{T})$. For convenience, we
call them $c$-\emph{Lusin} spaces. Our main result is the following
statement, see Theorem \ref{1tm} below.  Let $Y_1$ be the set of all
discontinuity points of $f^{-1}$. Then $Y_1$ is a closed subset of
$Y=f(X)$, which is a $c$-Lusin space in the subspace topology --
since its preimage $X_1$ is a locally compact Polish space (by the
continuity of $f$). At the same time, $Z:=Y \setminus Y_1$ is a
locally compact Polish space for it is homeomorphic to its preimage
$W$. Moreover, $Y_1$ is nowhere dense (hence $Z$ is dense) in $Y$ if
and only if $Y$ is a Baire space. If $f$ is \emph{feebly} open,
which implies that $Y$ is Baire, then also $X_1$ is a nowhere dense
subset of $X$. By repeating the same arguments one obtains the
decomposition $Y_1 = Z_1 \cup Y_2$, where $Z_1$ is a locally compact
Polish space and $Y_2$ is $c$-Lusin. This procedure can be continued
ad infinitum, or to the step at which $Z_k = Y_k$ or $Z_k =
\varnothing$, hence $Y_{k+1}=Y_k$. In Sect. 3, we study the case
where $f$ can be continuously extended to the Alexandroff
compactification $X\cup \{\infty\}$, and thus $Y_1$ is at most
singleton. For $Y_1=\{y_0\}$, hence $X_1 =\{x_0\}$,  $x_0 =
f^{-1}(y_0)$, in Theorem \ref{2tm} we show that $(Y, \mathcal{T}')$
is a compact Polish space homeomorphic to $(X, \mathcal{T}_{x_0})$,
where  $\mathcal{T}_{x_0}$ is the metric topology corresponding to
an explicitly constructed complete $x_0$-dependent metric.

\section{The main result}

We begin by making precise the notions used throughout. A set the
closure of which has empty interior is called nowhere dense.
\begin{definition}
  \label{1df}
A topological space is called a Baire space if the  union of any
countable collection of its closed subsets each with empty interior
also has empty interior.
\end{definition}
A detailed presentation of the properties of Baire spaces can be
found in \cite{HM}. Among them there are the following ones: (a)
every nonempty open subset of a Baire space is of second category
\cite[page 11]{HM}, and hence is a Baire space in the subspace
topology; (b) closed subsets need not be Baire; (c)  both locally
compact Hausdorff and completely metrizable spaces are Baire, see
\cite[Theorems 2.3 and 2.4]{HM}. Thus, our Polish space
$(X,\mathcal{T})$ is a Baire space. However, its continuous image
$Y=f(X)$ need not be such, which can be seen from the following
example. Consider $(\mathds{Q}, \mathcal{T})$ and $(\mathds{Q},
\mathcal{T}_{|\cdot|})$, where $\mathds{Q}$ is the set of rational
numbers, $\mathcal{T}=2^\mathds{Q}$, i.e., contains all subsets of
$\mathds{Q}$, and $\mathcal{T}_{|\cdot|}$ is the norm topology
related to absolute value $|\cdot|$. The metric $d(x,y) = 1$ for
$x\neq y$, and $d(x,y)=0$ for $x=y$, is consistent with
$\mathcal{T}$; hence, the former is a Polish space, which is
obviously locally compact. Since the embedding map $f:(\mathds{Q},
\mathcal{T})\to (\mathds{Q},\mathcal{T}_{|\cdot|})$ is continuous,
the latter is a $c$-Lusin space, which is apparently not Baire.
Noteworthy, these spaces are Borel isomorphic as their Borel
$\sigma$-fields coincide with $2^\mathds{Q}$ in this case.

The following is known.
\begin{proposition}\cite[Theorem 1.15]{HM}
  \label{0pn}
Every space which contains a dense Baire subspace is a Baire space.
\end{proposition}
A map $f:(X, \mathcal{T}) \to (Y, \mathcal{T}')$ is called
\emph{feebly} open if for each nonempty $A\in \mathcal{T}$, there
exists a nonempty $B\in \mathcal{T}'$ such that $B\subset f(A)$.
\begin{proposition}\cite[Theorem 4.1]{HM}
  \label{1pn}
Let $(X, \mathcal{T})$ be a Baire space and $f:(X, \mathcal{T}) \to
(Y, \mathcal{T}')$ a be continuous and feebly open surjection. Then
$(Y, \mathcal{T}')$ is also a Baire space.
\end{proposition}
 Our main result is the following statement.
\begin{theorem}
\label{1tm} Let $(Y, \mathcal{T}')$ be a $c$-Lusin space obtained by
means of a continuous bijection $f:(X, \mathcal{T})\to (Y,
\mathcal{T}')$. Then there exists an open subset $Z\subset Y$ such
that: (a) $(Z, \mathcal{T}'_Z)$ is a locally compact Polish space;
(b) $(Y_1 , \mathcal{T}'_1)$ is a $c$-Lusin space. Here
$\mathcal{T}'_Z = \{ A\cap Z : A \in \mathcal{T}'\}$, $Y_1 =
Y\setminus Z$ and $\mathcal{T}'_1 = \{ A\cap Y_1 : A \in
\mathcal{T}'\}$. $Y_1$ is a nowhere dense subset of $Y$ if and only
if $(Y, \mathcal{T}')$ is a Baire space.  If $f$ is feebly open,
hence $Y$ is Baire, then $X_1:=f^{-1}(Y_1)$ is nowhere dense in $X$.
\end{theorem}
\begin{proof}
Let $(X, \mathcal{T})$ and $Y=f(X)$  be as assumed. For $y\in Y$, by
$\mathcal{T}'(y)$ we denote the set of all $C\in \mathcal{T}'$ such
that $y\in C$.  For $x\in X$, $\mathcal{T}(x)$ is defined
analogously.  Now we define
\begin{equation}\label{f1}
Z=\{ y\in Y: \exists C\in \mathcal{T}'(y) \ f^{-1}(C) \ {\rm has} \
{\rm compact} \ {\rm closure}\}.
\end{equation}
Clearly, $Z\in \mathcal{T}'$ and $f^{-1}$ is discontinuous at each
$y\in Y_1 := Y \setminus Z$ since each $x=f^{-1}(y)$, also for $y\in
Y_1$, has a compact neighborhood. Let us show now  that $f^{-1}$ is
continuous at each $z\in Z$. Consider a net $\{y_\kappa\}\subset Y$
convergent to a given $z\in Z$, and set $x_\kappa =
f^{-1}(y_\kappa)$. Let also $C\in \mathcal{T}'(z)$ be as in
(\ref{f1}), and then $K\subset X$ be the compact closure of
$f^{-1}(C)$. Then the preimage of the part $\{y_\kappa\}\cap C$ lies
in  $K$, and hence possesses accumulation points. By the continuity
of $f$ it follows that all of them should coincide with the preimage
of $z$, which yields the continuity in question.

Denote $X_1 = f^{-1}(Y_1)$ and $W= f^{-1}(Z)$, and also
$\mathcal{T}_W = \{A\cap W: A\in \mathcal{T}\}$,
$\mathcal{T}_1=\{A\cap X_1: A\in \mathcal{T}\}$. Since $W$ is open
and $X_1$ is closed, both $(W, \mathcal{T}_W)$ and $(X_1,
\mathcal{T}_1)$ are Polish spaces, see \cite[Proposition 8.1.2, page
240]{Cohn}. These spaces are also locally compact, see \cite[Theorem
3.3.8]{Eng}. Hence, $(Y_1, \mathcal{T}'_1)$ is a $c$-Lusin space,
which yields the validity of claim (b). At the same time, by the
continuity of $f^{-1}|_{Z}$ the restriction $f_W:= f|_W$ is a
homeomorphism between $(W,\mathcal{T}_W)$ and $(Z,\mathcal{T}_Z)$,
which means that the latter is a locally compact Polish space, which
proves claim (a).

Let $Y$ be a Baire space. Every locally compact and second countable
space is also $\sigma$-compact. That is, there exists a sequence of
compact subsets, $\{K_n\}_{n\in \mathds{N}}$, such that: (a) each
$K_n$ is contained in the interior of $K_{n+1}$; (b) $X =\cup_n
K_n$. For such a sequence, we set $Q_n = f(K_n)$ and let $C_n$ be
the interior of $Q_n$. Then $Y= \cup_n Q_n$. Since $Y$ is a Baire
space, $C_n\neq \varnothing$ for at least some of $n\in \mathds{N}$.
On the other hand, $C_n\subset Z$ for each $n$, see (\ref{f1}). Set
$Q_{n, 1} = Q_n \cap Y_1$, which means that $Y_1 = \cup_n Q_{n,1}$.
Let $C_{n,1}$ be the interior of $Q_{n,1}$. Then $C_{n,1}\subset C_n
\subset Z$; hence, $C_{n,1}=\varnothing$ for all $n$; thus, $Y_1$ is
nowhere dense as the closed set of first category, see Definition
\ref{1df}. On the other hand, if $Z$ is dense in $Y$, then $Y$ is a
Baire space -- by Proposition \ref{0pn} and the openness of $Z$
shown above.

Finally, let $f$ be feebly open. If the interior of $X_1$ is
nonempty, then the interior of $Y_1$ should also be nonempty which
is impossible in this case  by Proposition \ref{1pn}. Thus,  $X_1$
ia a nowhere dense  subset of $X$. This completes the whole proof.
\end{proof}
From the proof made above, it readily follows that $X_1$ can be
characterized by the following property. Let $X\cup \{\infty\}$ be
the Alexandroff compactification of $(X,\mathcal{T})$. Let also
$\mathfrak{X}$ stand for the set of all sequences $\{x_n\}\subset X$
that converge to $\infty$. That is, each  $\{x_n\}\in \mathfrak{X}$
is eventually in $X \setminus K$ for each compact $K$. Then
$X_1=f^{-1}(Y_1)$ can be written in the form
\begin{equation}
  \label{f3}
  X_1 =\{ x\in X: \exists \{x_n\}\in \mathfrak{X} \ \lim_{n\to +\infty} f(x_n) = f(x)\}.
\end{equation}
By (\ref{f3}) one can see that the structure of $X_1$ (and thus of
$Y_1$) is predetermined by the properties of $f$ in the vicinity of
$\infty$. In particular, $X_1$ is at most singleton if $f$ has a
continuous extension to $X\cup \{\infty\}$. In the aforementioned
example with $X=Y=\mathds{Q}$, $\{x_n\}\in \mathfrak{X}$ means that,
for some $k$, $x_n \neq x_m$ for $n, m\geq k$. Take any $x\in
\mathds{Q}$ and set $x_n = x + 1/n$. Then this sequence belongs to
$\mathfrak{X}$. At the same time, $|x_n- x|= 1/n$, which means that
$x\in X_1$, and hence $X_1=X$.

By repeating the arguments used in the proof of Theorem \ref{1tm} we
obtain the following statement that establishes the structure of the
$c$-Lusin space $(Y, \mathcal{T}')$.
\begin{corollary}
  \label{1co}
There exists a descending sequence $\{Y_k\}_{k\in \mathds{N}}$ of
closed subsets of $Y$ such that, for each $k\in \mathds{N}$,
$(Y_k,\mathcal{T}'_k)$ and $(Z_k , \mathcal{T}_{Z_k}')$ are a
$c$-Lusin space and a locally compact Polish space, respectively.
Here $\mathcal{T}'_k = \{A \cap Y_k: A\in \mathcal{T}'\}$ and
$\mathcal{T}'_{Z_k} = \{A \cap Z_k: A\in \mathcal{T}'\}$.
\end{corollary}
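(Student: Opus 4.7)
My plan is a straightforward induction on $k$, using Theorem \ref{1tm} as the sole ingredient at every step. The base case is just Theorem \ref{1tm} applied to the original $c$-Lusin space $(Y,\mathcal{T}')$: the resulting nowhere dense closed set is taken as $Y_1$, and by part (b) of that theorem $(Y_1, \mathcal{T}'_1)$ is itself $c$-Lusin, which supplies exactly the hypothesis needed to invoke the theorem once more.

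For the inductive step, assume $(Y_k, \mathcal{T}'_k)$ has been constructed and is a $c$-Lusin space. I would apply Theorem \ref{1tm} to it and obtain a dense open $Z_k \subset Y_k$ that is locally compact Polish in the subspace topology, together with a nowhere dense closed complement $Y_{k+1} := Y_k \setminus Z_k$ that is $c$-Lusin in its subspace topology. This yields parts (a) and (b) of the corollary at level $k$, once one checks that the subspace topologies produced by Theorem \ref{1tm} --- those inherited from $(Y_k, \mathcal{T}'_k)$ --- coincide with $\mathcal{T}'_{k+1}$ and $\mathcal{T}'_{Z_k}$ as defined in the statement relative to the ambient $(Y,\mathcal{T}')$.

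This identification is just the transitivity of the subspace construction: for $S \subset T \subset Y$, the subspace topology on $S$ inherited via $T$ equals that inherited directly from $Y$. By the same principle, a closed subset of a closed subset being closed, $Y_{k+1}$ is closed in $(Y,\mathcal{T}')$, not only in $(Y_k,\mathcal{T}'_k)$. Should the iteration terminate at some stage with $Z_k = Y_k$ (equivalently, $f^{-1}$ already continuous on $Y_k$), I would extend the sequence by setting $Y_j = \varnothing$ for $j > k$, which trivially satisfies the stated properties. I do not anticipate any substantive obstacle: the corollary is a mechanical iteration of the reasoning already carried out in the proof of Theorem \ref{1tm}, the only minor bookkeeping being how the subspace topologies compose along the chain $Y \supset Y_1 \supset Y_2 \supset \cdots$.
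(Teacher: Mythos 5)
Your proposal matches the paper's own treatment: the paper gives no separate argument for the corollary beyond saying it follows "by repeating the arguments used in the proof of Theorem \ref{1tm}", which is exactly your induction, including the termination convention $Y_{k+1}=\varnothing$ when $Z_k=Y_k$ (noted by the authors right after the corollary). The only bookkeeping you add --- transitivity of subspace topologies and closedness of $Y_{k+1}$ in $Y$ --- is correct and is implicitly what the paper relies on.
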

Note that the aforementioned sequence may end up with
$Y_{k+1}=\varnothing$ or $Y_{k+1}=Y_k$ for some $k\in \mathds{N}_0$.
Here by $Y_0$ and $Z_0$ we mean $Y$ and $Z$, respectively. For $A\in
\mathcal{T}'$, set $A_{k+1} = A\cap Z_k$, $k\in \mathds{N}_0$. Then
$A_k \cap A_{k'}=\varnothing$ for all distinct $k$ and $k'$.
Therefore,
\begin{equation*}
A = \bigcup_{k\in \mathds{N}} A_k
\end{equation*}
is a disjoint decomposition of $A$ in which $A_k \in
\mathcal{T}'_{Z_{k-1}}$, $k\in \mathds{N}$.

Now we introduce a complete metric $\delta_k$, consistent with
$\mathcal{T}_{Z_k}'$,  $k\geq 0$.  In this context, we also set $W_k
= f^{-1}(Z_k)$ and $X_k = f^{-1}(Y_k)$, $k\geq 0$. Let $d$ be a
complete metric consistent with $\mathcal{T}$. For $x\in X_k$ and
$y\in Z_k$, $k\in \mathds{N}_0$, we define
\begin{equation}\label{m1}
d(x, X_{k+1}) = \inf_{v\in X_{k+1}} d(x, v), \qquad  \varkappa_k (y)
= 1/ d(f^{-1}(y), X_{k+1}).
\end{equation}
That is, $d(f^{-1}(y), X_{k+1})$ is the distance from the pre-image
of $y$ to $X_{k+1} = X_k \setminus W_k$. Now similarly as in
\cite[page 240]{Cohn} we set
\begin{equation*}
\delta_k (y , y') = d(f^{-1}(y), f^{-1}(y')) + \left|\varkappa_k (y) - \varkappa_k(y') \right|, \qquad y,y'\in Z_k, \quad k\geq 0.
\end{equation*}
By this construction, it follows that
\begin{gather*}
  \delta_k (y , y') = d_k (x, x') := d(x,x') + \left|\frac{1}{d(x,X_{k+1})} - \frac{1}{d(x',X_{k+1})}   \right|,
\end{gather*}
where $x=f^{-1}(y)$, $x'=f^{-1}(y')$. One can show, see the proof of
Proposition 8.1.2 in \cite{Cohn}, that $d_k$ is a complete metric
consistent with $\mathcal{T}_{W_k}$. Then $\delta_k$ is the metric
in question.

\section{A special case}

Here we consider the case where $f$ has a continuous extension to
the Alexandroff compactification of $X$. As mentioned above, this
property corresponds to $X_1$ consisting of at most one element. If
$X_1=\varnothing$, then $f$ is a homeomorphism. Recall that $(X,
\mathcal{T})$ is noncompact.
\begin{proposition}
\label{1apn}
 $(Y,\mathcal{T}')$ is compact if $X_1$ is  a singleton.
\end{proposition}
\begin{proof}
Set $X_1 =\{x_0\}$. Let $\{y_\iota\}\subset Y$ be a net, for which
we have the corresponding net of $x_\iota = f^{-1}(y_\iota)$. Then
ether $\{x_\iota\}\subset K$ for some compact $K\subset X$, or it
contains a sub-net, $\{x_\kappa\}$, convergent to $\infty$. In the
former case, the net $\{y_\iota\}$ is contained in the compact
$f(K)$, and hence has accumulation points. Otherwise, the sub-net
$\{y_\kappa\}$ converges to $f(x_0)$, which yields the compactness
of  $(Y,\mathcal{T}')$.
\end{proof}
A priori, even being compact $(Y, \mathcal{T}')$ need not be
metrizable. In the next statement, we nevertheless show that it is.
\begin{theorem}
  \label{2tm}
Assume that $X_1 = \{x_0\}$, and hence $f$ has a continuous
extension to the $X\cup \{\infty\}$. Then there exists a metric,
$\delta$, on $X$ such that $(X,\delta)$ is a compact metric space
homeomorphic to $(Y,\mathcal{T}')$.
\end{theorem}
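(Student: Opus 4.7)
The plan is to realize $(Y,\mathcal{T}')$ as a quotient of the Alexandroff compactification $X\cup\{\infty\}$ and then transport an induced metric back to $X$ via the canonical bijection between $X$ and this quotient.

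Under the standing assumption of this section, $f$ extends continuously to $\bar f\colon X\cup\{\infty\}\to Y$, and the condition $X_1=\{x_0\}$ forces $\bar f(\infty)=f(x_0)=:y_0$. By Proposition~\ref{1apn} the space $(Y,\mathcal{T}')$ is compact Hausdorff, so the continuous surjection $\bar f$ from the compact Hausdorff space $X\cup\{\infty\}$ is closed and hence a quotient map whose unique non-singleton fibre is $\{x_0,\infty\}$. Consequently $Y$ is canonically homeomorphic to $(X\cup\{\infty\})/\{x_0\sim\infty\}$. Since $X$ is locally compact Polish and hence $\sigma$-compact with a countable base, $X\cup\{\infty\}$ is compact Hausdorff and second countable, so Urysohn's metrization theorem supplies a metric $\rho$ compatible with its topology; I would fix such a $\rho$.

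With $\rho$ at hand, my candidate metric on $X$ is
\begin{equation*}
\delta(x,x'):=\min\bigl\{\rho(x,x'),\ \rho(x,x_0)+\rho(x',\infty),\ \rho(x,\infty)+\rho(x',x_0)\bigr\},\qquad x,x'\in X,
\end{equation*}
namely the restriction to $X$ of the standard quotient pseudo-metric on $X\cup\{\infty\}$ associated with the identification $x_0\sim\infty$. Symmetry is clear, and $\delta(x,x')>0$ whenever $x\neq x'$ because no element of $X$ equals $\infty$, so neither detour term can vanish unless $x=x'$. The triangle inequality rests on the ``chain'' interpretation of $\delta$ and is handled by the nine-case analysis comparing the terms attaining $\delta(x,z)$ and $\delta(z,y)$; each case collapses, after discarding a non-negative quantity of the form $\rho(z,x_0)+\rho(z,\infty)$, to the $\rho$-triangle inequality.

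It remains to identify the $\delta$-topology on $X$ with $f^{-1}(\mathcal{T}')$, from which the conclusion follows because $(Y,\mathcal{T}')$ is compact by Proposition~\ref{1apn}. For $x\neq x_0$ and $r$ small enough that $B_\rho(x,r)$ avoids $\{x_0,\infty\}$, one has $\delta(x,x')=\rho(x,x')$ there, so locally $\delta$ and $\mathcal{T}$ coincide; and $\mathcal{T}$ agrees with $f^{-1}(\mathcal{T}')$ at $x$ because $f^{-1}$ is continuous on $Y\setminus\{y_0\}$ by Theorem~\ref{1tm}. At $x_0$ the third term in the ``$\min$'' is redundant and a direct computation gives
\begin{equation*}
B_\delta(x_0,r)=\bigl(B_\rho(x_0,r)\cap X\bigr)\cup\bigl(X\setminus K_r\bigr),
\end{equation*}
where $K_r$ is the compact complement of $B_\rho(\infty,r)$ in $X\cup\{\infty\}$. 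Since $B_\rho(x_0,r)\cup B_\rho(\infty,r)$ is $\sim$-saturated and open in $X\cup\{\infty\}$, its $\bar f$-image is open in $Y$, and these images exhaust a neighbourhood base of $y_0$ by the continuity of $\bar f$ at both $x_0$ and $\infty$. The main obstacle I anticipate is exactly this topology match at $x_0$; the triangle inequality is routine bookkeeping and the remaining steps are forced once one recognises $Y$ as the two-point-identification quotient of $X\cup\{\infty\}$.
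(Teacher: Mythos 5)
Your proof is correct, but it takes a genuinely different route from the paper. The paper constructs $\delta$ directly from the original complete metric $d$ on $X$: it builds a Lipschitz function $h(x)=\min\{d(x,x_0);g(x)\}$, where $g$ is manufactured from an exhaustion $\{K_n\}$ and plays the role of a distance to $\infty$, sets $\delta(x,y)=\min\{d(x,y);h(x)+h(y)\}$, proves the metric axioms, completeness and total boundedness by hand (Lemma \ref{1lm}), and then obtains the homeomorphism with $(Y,\mathcal{T}')$ by a Taimanov-type continuous extension of $f^{-1}$ after identifying $\infty$ with $x_0$. You instead make the quotient picture, which the paper uses only heuristically, into the actual proof: the extension $\bar{f}$ is a closed quotient map from the compact, second countable Hausdorff space $X\cup\{\infty\}$ whose unique nontrivial fibre is $\{x_0,\infty\}$, so $Y\cong(X\cup\{\infty\})/(x_0\sim\infty)$; you metrize the compactification abstractly (Urysohn) and pull back the standard two-point-gluing quotient metric through the canonical bijection with $X$. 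Your route is shorter and makes compactness and the homeomorphism essentially automatic (saturated open sets have open images under a quotient map; a continuous bijection from a compact space to a Hausdorff space is a homeomorphism), while the paper's route keeps the metric explicit in terms of the given $d$ -- which is what the abstract promises -- and avoids any metrization theorem, since $g$ substitutes for $\rho(\cdot,\infty)$; indeed the two formulas are cousins, with $h(x)+h(y)$ playing the role of your two detour terms. The steps you assert rather than carry out are genuinely routine: the case analysis for the triangle inequality works exactly as you say (each case follows from the $\rho$-triangle inequality after discarding a nonnegative $\rho(z,x_0)+\rho(z,\infty)$, using also $\rho(x,y)\le\rho(x,\infty)+\rho(y,\infty)$), the local agreement of $\delta$ with $\rho$ at $x\neq x_0$ holds for radii below $\min\{\rho(x,x_0);\rho(x,\infty)\}$, and your appeal to Theorem \ref{1tm} for continuity of $f^{-1}$ off $y_0$ is legitimate there (in this case $Y_1=\{y_0\}$ and $Z$ is open), though it could also be replaced by the same saturated-open-image argument applied to small neighborhoods missing both $x_0$ and $\infty$.
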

The proof will be done by an explicit construction of $\delta$. Its
main idea stems from  the proof of Proposition \ref{1apn}, by which
$(Y, \mathcal{T}')$ is a one-point compactification of $(Z,
\mathcal{T}'_Z)$. To figure it out, we take two (disjoint) sequences
$\{x_k\}, \{x'_k\}\subset X\cup \{\infty\}$ such that $x_k \to
\infty$ and $x'_k\to x_0$. Then the closures in $Y$ of their
$f$-images contain $f(x_0)$. This yields that the map $f^{-1}$ from
$Z$ to $X\cup\{\infty\}$ is not uniformly
$\mathcal{T}'/\mathcal{T}$-continuous. Hence, by Taimanov's theorem,
see \cite{Taim},  it does not have a continuous extension to $Y$ in
this case. At the same time, it may get such an extension if one
identifies $\infty$ and $x_0$, and thus their neighborhoods. Then
$\delta$ is obtained by applying the corresponding construction of
\cite{Mandel}, modified to take into account the mentioned
identification. We thus begin by making this step.

For a nonempty $D\subset X$ and $r>0$, we set $D^r=\{x\in X: \exists v\in D \ d(x,v)<r\}$, where $d$ is as in (\ref{m1}). Let $\{K_n\}\subset X$ be an ascending sequence of compact subsets that exhausts $X$ and is such that $K_{n}$ is contained in the interior of $K_{n+1}$, $n\in \mathds{N}$.  Then one finds $\{r_n\}_{n\in \mathds{N}}\subset (0,+\infty)$ such that: (a) $K^{r_n} \subset K_{n+1}$; (b) $r_n > r_{n+1}$; (c) $r_n \to 0$ as $n\to +\infty$.
Of course, we can also assume that $x_0\in K_1$. Let us now define the following functions
\begin{equation}
  \label{1}
  g(x) = \max_{n\in \mathds{N}} \left[r_n - d(x, K_n) \right], \qquad h(x) = \min\{ d(x,x_0) ; g(x)\}, \quad x\in X.
\end{equation}
Note that $h(x_0)=0$ and $g(x_0) = r_1>0$. Moreover, $g(x)>0$ for all $x$. Let us prove that
\begin{equation}
  \label{2}
  |h(x) - h(y)|\leq d(x,y), \qquad x,y \in X,
\end{equation}
i.e., $x\mapsto h(x)$ is Lipschitz-continuous. Since the right-hand side of (\ref{2}) is symmetric with respect to the interchange $x\leftrightarrow y$, it is enough to show that
\begin{equation}
  \label{3}
   h(x) - h(y)\leq d(x,y), \qquad x,y \in X.
\end{equation}
For $h(y)=d(y,x_0)$, (\ref{3}) holds true by the triangle inequality for $d$. Indeed, by (\ref{1}) we have
\begin{gather*}
  h(x) - d(y,x_0) \leq d(x,x_0) - d(y,x_0) \leq d(x,y).
\end{gather*}
Assume now that $h(y)=g(y)$. By (\ref{1}) we then have $h(x) - h(y) \leq g(x) - g(y)$, which means that (\ref{3}) will follow by
\begin{equation*}
  g(x) - g(y) \leq d(x,y).
\end{equation*}
First we consider the case of $g(x)=r_n$ for some $n\in \mathds{N}$, which corresponds to $x\in K_n$. For this $n$, by (\ref{1}) we have $g(y) \geq r_n - d(y, K_n)$, which implies
\[
g(x) - g(y) \leq d(y, K_n) \leq d(x,y).
\]
For $g(x)= r_n - d(x, K_n)$, similarly we have
\begin{gather*}
  g(x)-g(y) \leq d(y, K_n) - d(x,K_n) = d(y, K_n) - d(x,z) \\[.2cm] \nonumber
 \leq d(y, z) - d(x,z) \leq d(x,y),
\end{gather*}
where $z\in K_n$ is such that $d(x,K_n) = d(x,z)$. This completes the proof of (\ref{2}).

Let us now introduce a candidate for another metric on $X$. Set
\begin{equation}
  \label{7}
  \delta(x,y) = \min\{d(x,y); h(x) + h(y)\}.
\end{equation}
\begin{lemma}
  \label{1lm}
It follows that $\delta$ defined in (\ref{7}) is a metric, such that the metric space $(X,\rho)$ is compact.
\end{lemma}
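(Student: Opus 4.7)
The plan is to verify the three metric axioms for $\delta$ from (\ref{7}), and then to establish sequential compactness of $(X,\delta)$ by combining the geometry of the exhaustion $\{K_n\}$ with the Lipschitz control (\ref{2}).

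Symmetry and $\delta(x,x)=0$ are immediate. For the separation axiom I would first note that $g(x)>0$ for every $x\in X$: since $X=\cup_n K_n$, there is some $n$ with $x\in K_n$, and then $r_n-d(x,K_n)=r_n>0$, giving $g(x)\geq r_n$. Combined with $h(x)=\min\{d(x,x_0);g(x)\}$, this shows $h(x)=0$ if and only if $x=x_0$. Hence $\delta(x,y)=0$ forces either $d(x,y)=0$ (so $x=y$) or $h(x)=h(y)=0$ (so $x=y=x_0$). For the triangle inequality, I would split into cases depending on which term realizes the minimum in each of $\delta(x,y)$ and $\delta(y,z)$. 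When both minima are $d$-terms, the conclusion follows from the $d$-triangle inequality; when both are $h$-sums, one bounds $h(x)+2h(y)+h(z)\geq h(x)+h(z)\geq \delta(x,z)$. The only nontrivial case is the mixed one, say $\delta(x,y)=d(x,y)$ and $\delta(y,z)=h(y)+h(z)$: here (\ref{2}) yields $h(y)\geq h(x)-d(x,y)$, so that $\delta(x,y)+\delta(y,z)\geq h(x)+h(z)\geq \delta(x,z)$.

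The crux of the compactness proof is the observation that $h(x_n)\to 0$ whenever $\{x_n\}$ converges to $\infty$ in the Alexandroff sense. To see this, given $\varepsilon>0$ I would pick $N$ with $r_N<\varepsilon$ and remark that, for any $x\notin K_N$ and any $k<N$, the inclusions $K_k^{r_k}\subset K_{k+1}\subset K_N$ force $d(x,K_k)\geq r_k$, so the contribution $r_k-d(x,K_k)$ to $g(x)$ is nonpositive; for $k\geq N$ that contribution is bounded by $r_k\leq r_N<\varepsilon$. Hence $g(x)<\varepsilon$ for every $x\notin K_N$, and $h\leq g$ delivers the claim.

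With this in hand, sequential compactness of $(X,\delta)$ follows by a dichotomy applied to an arbitrary sequence $\{x_n\}\subset X$. If some subsequence lies entirely in a single $K_M$, then the $d$-compactness of $K_M$ provides a further $d$-convergent subsequence, which is automatically $\delta$-convergent because $\delta\leq d$. Otherwise $\{x_n\}$ eventually leaves every $K_M$, i.e.\ $x_n\to\infty$, and then $\delta(x_n,x_0)\leq h(x_n)+h(x_0)=h(x_n)\to 0$, so the full sequence converges to $x_0$ in $\delta$. The step I expect to be the main obstacle is precisely the estimate $g(x)<\varepsilon$ for $x\notin K_N$ used in the third paragraph; once the separation $K_k^{r_k}\subset K_{k+1}$ built into the choice of $\{r_n\}$ is exploited, however, this bound is short and the rest of the argument is routine.
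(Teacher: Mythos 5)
Your proof is correct. The verification of the metric axioms follows the same case analysis as the paper (and is in fact stated more cleanly: you make explicit that the separation axiom rests on $g(x)>0$ for all $x$, and your mixed case uses the Lipschitz bound (\ref{2}) exactly as intended). Where you diverge is the compactness part: the paper proves compactness by establishing completeness of $\delta$ together with total boundedness (exhibiting the finite $\varepsilon$-net $D_\varepsilon=C_\varepsilon\cup\{x_0\}$), whereas you prove sequential compactness directly, which for a metric space is equivalent and lets you skip the total-boundedness step altogether. The underlying dichotomy is the same in both arguments -- either a (sub)sequence stays in some $K_M$ and one falls back on $d$-compactness together with $\delta\leq d$, or the sequence escapes every $K_M$ and then $h\to 0$ forces $\delta$-convergence to $x_0$. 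A genuine added value of your write-up is that you actually prove the key estimate $g(x)<\varepsilon$ for $x\notin K_N$ from the separation condition $K_k^{r_k}\subset K_{k+1}$, a point the paper only asserts ("$g(x_{l_m})\to 0$, see (\ref{1})"); conversely, the paper's route has the minor virtue of displaying the $\varepsilon$-nets explicitly, which makes the total boundedness of $\delta$ a reusable statement in its own right.
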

\begin{proof}
Obviously $\delta$ is  symmetric and $\delta(x,y)=0$ implies $x=y$. Then to prove that $\delta$ is a metric it remains to show that
\begin{equation}\label{8}
\delta(x,y) \leq \delta(x,z) + \delta(y,z), \qquad x,y,z \in X.
\end{equation}
By (\ref{7}), it follows that each $\delta$ in (\ref{8}) can equal either the corresponding $d$ or the sum of two corresponding $h$.
If all the three $\delta$'s equal the corresponding $d$'s, then (\ref{8}) follows by the triangle inequality for $d$. Let us consider the case where one of the $\delta$'s in (\ref{8}) equals the sum of the $h$'s, whereas the remaining  two equal the corresponding $d$'s. If this holds on the left-hand side -- which by (\ref{7}) corresponds to $h(x) + h(y)\leq d(x,y)$ --  then (\ref{8}) turns into
\begin{equation*}
  h(x) + h(y) \leq d(x,z) + d(y,z).
\end{equation*}
As just mentioned, $h(x) + h(y) \leq d(x,y)$, which yields the
validity of (\ref{8}) by the triangle inequality for $d$. Now let
the mentioned equality holds on the right-hand side of (\ref{8}). By
the symmetry $x\leftrightarrow y$, it is enough to consider only the
case $\delta(y,z) = h(y) + h(z)$. Then (\ref{8}) turns into
\begin{equation}\label{10}
d(x,y) \leq d(x,z) + h(y) + h(z).
\end{equation}
Since $\delta(x,y) = d(x,y)$, by (\ref{8}) and (\ref{2}) it follows that
\begin{gather*}
  d(x,y) \leq h(x) + h(y) = h(x) - h(z) + h(y) + h(z) \leq d(x,z) + h(y) + h(z) ,
\end{gather*}
which yields (\ref{10}). Now let two $\delta$'s be equal to the corresponding sums of the $h$'s. This case splits into the following ones
\begin{gather}
  \label{12}
  d(x,y) \leq h(x) + h(y) + 2 h(z), \\[.2cm] \nonumber h(x) \leq d(x,z) + h(z).
\end{gather}
The validity of the first line in (\ref{12}) follows by the fact that $d(x,y) \leq h(x) + h(y)$, see (\ref{8}). The validity of the second one follows by (\ref{2}). This completes the proof of (\ref{8}).

Let us turn now to proving the compactness, which is equivalent to the completness of $(X,\delta)$ and the total boundedness of $\delta$.
Let $\{x_l\}_{l\in \mathds{N}}$ and $\{K_n\}_{n\in \mathds{N}}$ be a $\delta$-Cauchy sequence and the ascending sequence as in (\ref{1}), respectively. Then either $\{x_l\}_{l\in \mathds{N}}\subset K_n$ for some $n$, or there exists a subsequence $\{x_{l_m}\}_{m\in \mathds{N}} \subset \{x_l\}_{l\in \mathds{N}}$ such that, for each $n$, there exists $m_n$ such that $x_{l_m}\in X\setminus K_{n}$ for all $m>m_n$.
In the former case,  there exists a subsequence $\{x_{l_p}\}_{p\in \mathds{N}} \subset \{x_l\}_{l\in \mathds{N}}$ $d$-convergent to a certain $x\in K_n$. By (\ref{7}) $\{x_{l_p}\}_{p\in \mathds{N}}$ is also $\delta$-convergent to $x$; hence, the whole $\{x_l\}_{l\in \mathds{N}}$ is $\delta$-convergent. In the latter case, $g(x_{l_m})\to 0$ as $m\to +\infty$, see (\ref{1}). The latter implies $h(x_{l_m})\to 0$, which  yields
\[
0 \leq \delta(x_{l_m}, x_0) \leq h(x_{l_m})+h(x_0) = h(x_{l_m}) \to 0.
\]
Hence, $\{x_{l_m}\}_{m\in \mathds{N}}$, and thus also $\{x_{l}\}_{l\in \mathds{N}}$ converge in $\delta$ to $x_0$. This yields the completness of $\delta$. To prove the total boundedness, we have to show that, for each $\varepsilon>0$, there exists a finite $D_\varepsilon\subset X$ such that $B^\delta_\varepsilon (x) \cap D_\varepsilon\neq \varnothing$ holding for each $x\in X$.
Here $B^\delta_\varepsilon (x) =\{y\in X: \delta (x,y)<\varepsilon\}$, which by (\ref{7}) contains the ball $B_\varepsilon (x)$. Let $\{r_n\}_{n\in \mathds{N}}$ be the sequence that appears in (\ref{1}).
For a given $\varepsilon>0$, find $n_\varepsilon \in \mathds{N}$ such that $r_n < \varepsilon$ whenever $n\geq n_\varepsilon$. By (\ref{1}) we then have that $h(x)<\varepsilon$ for all $x\in X\setminus K_{n_\varepsilon}$. Since $K_{n_\varepsilon}$ is compact in $(X,\mathcal{T})$, one finds a finite $C_\varepsilon\subset K_{n_\varepsilon}$ such that $B_\varepsilon (x)\cap C_\varepsilon\neq \varnothing$, holding for each $x\in K_{n_\varepsilon}$. Then the set in question is $D_\varepsilon = C_\varepsilon \cup\{x_0\}$. This completes the proof.
\end{proof}

\noindent

{\it Proof of Theorem \ref{2tm}.} In view of Lemma \ref{1lm}, it
remains to show that the spaces $(X, \mathcal{T}_{x_0})$ and $(Y,
\mathcal{T}')$ are homeomorphic. Here by $\mathcal{T}_{x_0}\subset
\mathcal{T}$ we mean the metric topology associated with $\delta$.
In is clear now that for any two disjoint $\delta$-convergent
sequences $\{x_k\}, \{x'_k\}\subset X$, the closures of $\{f(x_k)\}$
and $\{f(x'_k)\}$ in $(Y,\mathcal{T}')$ are disjoint, which means
that the map $f^{-1}: Z \to X$ can now be continuously extended to
the whole $Y$. As such, it turns into a homeomorphism, cf.
\cite[Theorem 7.7, page 19]{Bredon}, which yields the proof. \hfill
$\square$


\begin{thebibliography}{ll}


\bibitem{Bes} A. Be\v{s}lagi\'c, Embedding cosmic spaces in Lusin spaces, Proc. Amer. Math. Soc. (1983) 89: 515--518.
\bibitem{BBR} L. Beznea, N. Boboc, M. R\"ockner, Quasi-regular Dirichlet forms and $L^p$-resolvents
on measurable spaces,
Potential Anal. (2006) 25: 269–282.

\bibitem{BK} D. Blount, M. Kouritzin, On convergence determining and separating
classes of functions, Stochastic Process. Appl., (2010) 120:
1898--=1907.

\bibitem{Bredon} G. E. Bredon, Topology and Geometry. Graduate Texts in Mathematics 139, Springer New York, 1993.
\bibitem{Cohn} D. L. Cohn, Measure Theory. Second edition. Birkh\"auser Advanced Texts: Basler Lehrb\"ucher.  Birkh\"auser/Springer,
 New York, 2013.ng

 \bibitem{Craul} H. Crauel, Random Probability Measures on Polish
 spaces. Stochastic Monographs, vol. 11, CRC Press, 2003.
 \bibitem{Dawson} D. A. Dawson, Measure-valued Markov Processes. {\'E}cole d'{\'E}t{\'e} de
Probabilit{\'e}s de Saint-Flour XXI--1991, 1--260, Lecture Notes in
Math., 1541, \emph{Springer}, Berlin, 1993.


\bibitem{Eng} R. Engelking, General Topology, 2nd edition, PWN, Warszawa, 1985.


\bibitem{HM} R. C. Haworth, R. A. MaCoy, Baire Spaces, Dissertationes Math., CXLI, Warszawa PWN, 1977.

\bibitem{Levi} S. Levi, A. Maitra, Borel measurable images of Polish
spacs, Proc. Amer. Math. Soc. (1984) 92: 98--102.

\bibitem{Mandel} M. Mandelkern,  Metrization of the one-point
compactification. Proc. Amer. Math. Soc. (1989) 107: 1111--1115.

\bibitem{Nik} J. Niknejad, V. V. Tkachuk, L. Yengulalp,
Polish factorizations, cosmic spaces and domain representability.
Bull. Belg. Math. Soc. (2018) 25(3):  439--452.


\bibitem{Taim} A.D. Taimanov, On extension of continuous mappings of topological spaces. Mat. Sbornik N.S. (1952) 31: 459--463.

\bibitem{PT} P. Ter\'an, Choquet theorem for random sets in Polish
spaces and beyond. In: Destercke, S., Denoeux, T., Gil, M.\'A.,
Grzegorzewski, P., Hryniewicz, O. (eds.) SMPS 2018. AISC, vol. 832,
pp. 208--215. Springer, Cham, 2019.

\end{thebibliography}
\end{document}